\theoremstyle{remark}{
\newtheorem{Def}{{\rm Definition}}

\newtheorem{Prob}{{\rm Problem}}

}
\theoremstyle{plain}
{

\newtheorem{Prop}{Proposition}

\newtheorem{MainThm}{Main Theorem}

}
\begin{document}
\title[Hyperbolas and related new real algebraic maps]{On real algebraic maps whose images are domains surrounded by the products of hyperbolas and real affine spaces}
\author{Naoki kitazawa}
\keywords{Real algebraic functions and maps. Real algebraic sets. Real algebraic hypersurfaces. Hypersurfaces of degree 1 or 2. \\
\indent {\it \textup{2020} Mathematics Subject Classification}: Primary~14P05, 14P10, 14P25, 57R45, 58C05. Secondary~52C17, 57R19.}
\address{Osaka Central
	Advanced Mathematical Institute, 3-3-138 Sugimoto, Sumiyoshi-ku Osaka 558-8585 \\
	TEL (Office): +81-6-6605-3103 \\
	FAX (Office): +81-6-6605-3104 \\
}
\email{naokikitazawa.formath@gmail.com}
\urladdr{https://naokikitazawa.github.io/NaokiKitazawa.html}

\maketitle
\begin{abstract}
Previously, we have systematically constructed explicit real algebraic functions which are represented as the compositions of smooth real algebraic maps whose images are domains surrounded by hypersurfaces of degree 1 or 2 with canonical projections. Here we give new examples with the hypersurfaces each of which is the product of a connected component of a hyperbola and a copy of an affine space explicitly. As a related future work we also discuss problems to obtain the zero sets of some real polynomials explicitly from increasing sequences of real numbers.

This is motivated by a problem in theory of smooth functions proposed first by Sharko: can we construct nice smooth functions whose Reeb graphs are as prescribed? The Reeb space of a smooth function is the naturally obtained graph whose underlying space is the quotient space of the manifold consisting of connected components of preimages. 
The author first considered variants respecting the topologies of the preimages and obtained several results before.
Our work is also motivated by real algebraic geometry, pioneered by Nash. We can know existence of real algebraic structures of smooth manifolds and some general sets and we already know several approximations of smooth maps by real algebraic maps. Our interest lies in explicit construction, which is difficult.

\end{abstract}
\section{Introduction.}
\label{sec:1}
\subsection{A short presentation on real algebraic manifolds and maps and our motivation for construction of such maps and the manifolds explicitly.}
Real algebraic manifolds and smooth real algebraic maps between such manifolds have been studied as important topics in real algebraic geometry. 

In short, real algebraic cases are based on the zero sets of real polynomials, which are so-called {\it real algebraic} sets {\it defined by the polynomials}. 
Similarly, Nash ones are based on the sets defined by equations or inequalities on real polynomials, which are so-called {\it semi-algebraic} sets. See \cite{shiota} for example. The Nash category is originally considered in an old version \cite{kitazawa6}, which will be withdrawn, of our paper. 

In our paper, fundamental notions from related theory and our presentation on related history are based on \cite{akbulutking, bochnakcosteroy, kollar, kucharz, nash, tognoli} for example.

It is important to know existence of such manifolds and maps.
For example, Nash and Tognoli have shown that smooth closed manifolds are regarded as the real algebraic sets defined by some real polynomials. Akbulut and King with various researchers on real algebraic geometry have contributed to studies on problems asking whether subsets in Euclidean spaces are real algebraic sets, real algebraic manifolds, or approximated by suitable sets of such classes, for example. Approximations of smooth maps by real algebraic maps are also important and some affirmative answers have been known. Some are still open and important problems.


Our interest is on explicit construction of explicit real algebraic functions or maps of non-positive codimensions. Natural projections of spheres embedded naturally in the one-dimensional higher Euclidean spaces are simplest examples on closed manifolds. As functions regarded as generalized cases, some functions on so-called {\it symmetric spaces} are well-known as \cite{ramanujam, takeuchi} show.
They are given by natural real polynomials. Their global structures are hard to understand. For example, can we obtain information on preimages? It seems to be very difficult in general.

As pioneering results on such explicit construction, the author gives explicit examples in \cite{kitazawa3, kitazawa5, kitazawa6, kitazawa7} for example. This is based on a problem asking whether we can construct nice smooth functions with mild singularities and prescribed preimages. This is also a revised version of Sharko's question and first considered by the author essentially in \cite{kitazawa1, kitazawa2}. \cite{sharko} asks whether for a graph we can construct a nice function whose Reeb graph is the graph. Related to these studies, \cite{masumotosaeki, michalak} are also important, for example. The {\it Reeb graph} of a smooth function of a certain nice class such as one in \cite{saeki2} is a natural graph whose underlying space is the space of all connected components of preimages and the canonically defined quotient space of the manifold. \cite{reeb} is one of classical pioneering papers on Reeb graphs.

\subsection{Terminologies and notation on manifolds and maps and our problems and new answers.}
Let $X$ be a topological space which is homeomorphic to a cell complex of a finite dimension. We can define the dimension $\dim X$ as a unique integer. A topological manifold is well-known to be homeomorphic to some CW complex. A smooth manifold is well-known to have the structure of a polyhedron and we have one in a canonical way: the polyhedron is a so-called PL manifold. A topological space homeomorphic to a polyhedron whose dimension is at most $2$ is well-known to have the structure of a polyhedron and it is also unique. For a topological manifold, such a nice fact holds in the case where the dimension is at most $3$. Check the celebrated theory \cite{moise} for example. 
  
Let ${\mathbb{R}}^k$ denote the $k$-dimensional Euclidean space. This is a simplest $k$-dimensional smooth manifold. This is also regarded as the Riemannian manifold endowed with the standard Euclidean metric. Let $\mathbb{R}:={\mathbb{R}}^1$ and $\mathbb{Z} \subset \mathbb{R}$ be the set of all integers. Let $\mathbb{N} \subset \mathbb{Z}$ the set of all integers.
For each point $x \in {\mathbb{R}}^k$, $||x|| \geq 0$ can be defined as the distance between $x$ and the origin $0$ under the metric.
This is also naturally a smooth real algebraic manifold, smooth Nash manifold and a real analytic manifold. For example, the real algebraic manifold is called the k-dimensional real affine space. $S^k:=\{x \in {\mathbb{R}}^{k+1} \mid ||x||=1\}$ denotes the $k$-dimensional unit sphere, which is a $k$-dimensional smooth compact submanifold of ${\mathbb{R}}^{k+1}$ and has no boundary. It is connected for any positive integer $k \geq 1$. It is a discrete set with exactly two points for $k=0$. It is a smooth real algebraic set, which is the zero set of the real polynomial $||x||^2={\Sigma}_{j=1}^{k+1} {x_j}^2$ with  $x:=(x_1,\cdots,x_{k+1})$. 
$D^k:=\{x \in {\mathbb{R}}^{k} \mid ||x|| \leq 1\}$ is the $k$-dimensional unit disk, which is a $k$-dimensional smooth compact and connected submanifold of ${\mathbb{R}}^{k}$ for any non-negative integer $k \geq 0$. 
This is a semi-algebraic set.

Let $c:X \rightarrow Y$ be a differentiable map between two differentiable manifolds $X$ and $Y$. $x \in X$ is a {\it singular} point of the map if the rank of the differential at $x$ is smaller than the minimum between the dimensions $\dim X$ and $\dim Y$. We call $c(x)$ a {\it singular value} of $c$.
Let $S(c)$ denote the set of all singular points of $c$.
In our paper, for differentiable maps, we consider smooth maps, which are maps of the class $C^{\infty}$, unless otherwise stated. Smooth maps between manifolds regarded as smooth manifolds always mean such maps.  
For smooth manifolds, {\it non-singular} real algebraic manifolds are considered as so-called non-singular (real algebraic) manifolds in the theory of (real) algebraic manifolds. More rigorously, we only consider unions of connected components of some real algebraic sets essentially and define such notions by the rank of the map defined canonically from the polynomials. 

A canonical projection of a Euclidean space ${\mathbb{R}}^k$ is the smooth surjective map mapping 
each point $x=(x_1,x_2) \in {\mathbb{R}}^{k_1} \times {\mathbb{R}}^{k_2}={\mathbb{R}}^k$ to the first component $x_1 \in {\mathbb{R}}^{k_1}$ where the conditions on the dimensions are given by $k_1, k_2>0$ and $k=k_1+k_2$. Let ${\pi}_{k,k_1}:{\mathbb{R}}^{k} \rightarrow {\mathbb{R}}^{k_1}$ denote the map. We define a canonical projection of the unit sphere $S^{k-1}$ as the restriction of this canonical projection.

We go back to our motivation, problems and answers.
For $X \subset \mathbb{R}$ and $a \in X$, we define $X_{a}:=\{x \in X \mid x \leq a \}$.
\begin{Prob}
	\label{prob:1}
	Let $\{t_j\}_{j=1}^l$ be an increasing sequence of real numbers of length $l>1$ and $l_{{\mathbb{N}}_{l-1}}:{\mathbb{N}}_{l-1} \rightarrow \{0,1\}$ a map. For a sufficiently large integer $m$, can we have an $m$-dimensional non-singular real algebraic connected manifold $M$ with no boundary and a smooth real algebraic function $f:M \rightarrow \mathbb{R}$ enjoying the following properties.
	\begin{enumerate}
		\item The image is $f(M)=[t_1,t_{l}]$ and the image $f(S(f))$ is $\{t_j\}_{j=1}^l$.
\item The preimage $f^{-1}(t)$ is connected and homeomorphic to a CW complex whose dimension is at most $m-1$ for any $t \in f(M)$.
		\item The preimage $f^{-1}(t)$ is a smooth compact submanifold with no boundary if $t \in (t_{j},t_{j+1})$ and $l_{{\mathbb{N}}_{l-1}}(j)=0$. The preimage $f^{-1}(t)$ is a smooth non-compact submanifold and has no boundary if $t \in (t_{j},t_{j+1})$ and $l_{{\mathbb{N}}_{l-1}}(j)=1$. 
\item $f$ is represented as the composition of a smooth real algebraic map $f_{D}:M \rightarrow {\mathbb{R}}^n$ for a suitable integer $n \geq 3$ enjoying the following nice properties with a canonical projection ${\pi}_{n,1}$.
\begin{enumerate}
\item The image $f_{D}(M)$ is the closure $\overline{D}$ of a nice open set $D \subset {\mathbb{R}}^n$.
\item $D$ is surrounded by smooth real algebraic hypersurfaces being connected components of the real algebraic sets defined by some real polynomials of degrees at most $2$.
\end{enumerate}
	\end{enumerate}
$D$ is formulated rigorously in Definition \ref{def:1} and $f_D$ is presented in Proposition \ref{prop:1} explicitly, later.
	\end{Prob}
We give an explicit affirmative answer in \cite{kitazawa7}. Our preprint \cite{kitazawa6}, which will be withdrawn, considers construction of smooth Nash maps and manifolds where these objects are revised to ones in the real algebraic category.
\begin{MainThm}[This is presented again as Main Theorem \ref{mthm:2} more rigorously]
\label{mthm:1}
We have a new affirmative answer to Problem \ref{prob:1} taking each hypersurface surrounding $D$ as the product of a connected component of a hyperbola and a copy of an affine space whereas in \cite{kitazawa7} they may be other kinds of connected components of the real algebraic sets defined by some real polynomials of degrees at most $2$. 
\end{MainThm}
For this, especially, a smooth case \cite{kitazawa2} has motivated. There we have constructed smooth functions with prescribed preimages consisting of copies of Euclidean spaces and unit spheres on compact or non-compact manifolds with no boundaries. \cite{kitazawa3, kitazawa5} are cases where the manifolds $M$ are non-singular, real algebraic and closed.
The next section explains about this including the proof and related new problems. 

\ \\
\noindent {\bf Conflict of interest.} \\
The author was a member of the project JSPS Grant Number JP17H06128 (Principal Investigator: Osamu Saeki). He was
also a member of the project JSPS KAKENHI Grant Number JP22K18267 (Principal Investigator: Osamu Saeki). Under their support we do this study. The author is a researcher at Osaka Central
Advanced Mathematical Institute (OCAMI researcher) whereas he is not employed there. This also supports our study. \\
\ \\
{\bf Data availability.} \\
Data essentially supporting our present study are all in the
 paper.
\section{On Main Theorems.}

We introduce notation on a hyperbola. 
We consider real numbers  $a$, $b$ and $c \neq 0$.
We consider the subset $C_{{\rm H},a,b,c}:=\{(x_1,x_2) \in {\mathbb{R}}^2 \mid (x_1-a)(x_2-b)=c\}$. 
This is a $1$-dimensional non-singular real algebraic manifold and diffeomorphic to $\mathbb{R} \sqcup \mathbb{R}$.
The two connected components is denoted by $C_{{\rm H}_{+},a,b,c}:=\{(x_1,x_2) \in C_{{\rm H},a,b,c} \mid x_2>b\}$ and $C_{{\rm H}_{-},a,b,c}:=\{(x_1,x_2) \in C_{{\rm H},a,b,c} \mid x_2<b\}$. This is also a so-called {\it hyperbola}.

We can easily guess the definition of a {\it real affine transformation} of a real affine space. 
However, we explain about it shortly.
Of course a real affine space is naturally regarded as a real vector space and we can consider linear isomorphisms (transformations).
An {\it affine transformation} on a given real affine space means a transformation represented as the composition of a linear isomorphism (transformation) with an operation of obtaining the sum of the original element and some fixed element of the real affine space.
\subsection{A proposition for our proof of Main Theorems.}
Our main ingredient is, in short, explicitly reviewing construction of smooth real algebraic maps first presented in \cite{kitazawa3}, followed by \cite{kitazawa5} in a generalized way.
We prepare the following for our proof of Main Theorems. For a finite set $X$, $|X| \in \mathbb{N} \subset \{0\}$ denotes its size.
\begin{Def}
\label{def:1}
Let $D$ be a connected open set in ${\mathbb{R}}^n$ enjoying the following properties. 
\begin{enumerate}
\item
\label{def:1.1}
There exist a suitable positive integer $l>0$ and a family $\{f_j\}_{j=1}^l$ of $l$ real polynomials with $n$ variables.  
\item
\label{def:1.2}
$D:={\bigcap}_{j=1}^l \{x \in {\mathbb{R}}^n \mid f_j(x)>0\}$ holds and it is also a semi-algebraic set. 
\item
\label{def:1.3}
For the closure $\overline{D}$ of $D$, $\overline{D}={\bigcap}_{j=1}^l \{x \in {\mathbb{R}}^n \mid f_j(x) \geq 0\}$ holds and it is also a semi-algebraic set.

\item
\label{def:1.4}
$S_j$ is the union of some of connected components of the real algebraic set $\{x \in {\mathbb{R}}^n \mid f_j(x)=0\}$ and a non-singular real algebraic hypersurface with no boundary. Furthermore, $\{x \in {\mathbb{R}}^n \mid f_j(x)=0\}-S_j$ and the set $\overline{D}$ are disjoint.
\item
\label{def:1.5}
Distinct hypersurfaces in $\{S_j\}$ intersect satisfying the following conditions on the "transversality".
\begin{itemize}
\item Each intersection ${\bigcap}_{j \in \Lambda} S_j$ ($\Lambda \subset {\mathbb{N}}_{l}$) is empty or a non-singular real algebraic hypersurface with no boundary and of dimension $n-|\Lambda|$ (where $\Lambda$ is not empty).
\item Let $e_j:S_j \rightarrow {\mathbb{R}}^n$ denote the canonically defined smooth (real algebraic) embedding. For each intersection ${\bigcap}_{j \in \Lambda} S_j$ before and each point $p$ there, the image of the differential of the canonically defined embedding of ${\bigcap}_{j \in \Lambda} S_j$ at the point $p$ and the intersection of the images of all differentials of the embeddings in ${\{e_j\}}_{j \in \Lambda}$ at the point $p$ coincide. 
\end{itemize}
\end{enumerate}
Then $D$ is said to be a {\it normal and convenient} semi-algebraic set. We also call it an {\it NC} semi-algebraic set. 
\end{Def}
Note that "normal and convenient" or "NC" is identical to our paper (and our preprints \cite{kitazawa6, kitazawa7}), as we know. Note also that some conditions are a bit different from them.

The following reviews main ingredients of \cite{kitazawa3, kitazawa5}. This is also similar to related arguments in \cite{kitazawa6, kitazawa7}.
\begin{Prop}
	\label{prop:1}
For an NC semi-algebraic set $D$ in ${\mathbb{R}}^n$ and an arbitrary sufficiently large integer $m$, we have a non-singular real algebraic connected manifold $M$ which is also a connected component of the real algebraic set defined by some $l$ real polynomials and a smooth real algebraic map $f_D:M \rightarrow {\mathbb{R}}^n$ enjoying the following properties.
\begin{enumerate}
\item The image $f_D(M)$ is the closure $\overline{D}$.
\item $f_D(S(f_D))=\overline{D}-D$.
\item The preimage ${f_D}^{-1}(p)$ of each point $p$ is at most {\rm (}$m-n${\rm )}-dimensional and it is seen as a smooth submanifold diffeomorphic to the product of finitely many copies of unit spheres if $p \in D$.  
\end{enumerate} 
\end{Prop}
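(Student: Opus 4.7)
The plan is to realize $M$ as a polynomial sphere fibration over $\overline{D}$, following the strategy of \cite{kitazawa3, kitazawa5}. Fix positive integers $k_1, \ldots, k_l \geq 2$ with $\sum_{j=1}^l k_j = m - n + l$ (possible for $m$ sufficiently large), introduce coordinates $y_j \in \mathbb{R}^{k_j}$, and consider the $l$ polynomials
$$F_j(x, y_1, \ldots, y_l) := \|y_j\|^2 - f_j(x), \quad j = 1, \ldots, l,$$
on $\mathbb{R}^n \times \mathbb{R}^{k_1} \times \cdots \times \mathbb{R}^{k_l}$. Take $M$ to be their common zero set and $f_D$ to be the projection onto the first $\mathbb{R}^n$-factor.

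The main step is to verify that $M$ is non-singular, which is precisely where the NC transversality hypothesis is used. At $(x, y) \in M$, the Jacobian of $(F_1, \ldots, F_l)$ is the block matrix whose $j$-th row reads $(-\nabla f_j(x), 0, \ldots, 2 y_j, \ldots, 0)$. Any row with $y_j \neq 0$ is supported in a $y$-slot disjoint from the others, so such rows are trivially independent from the rest. For the residual index set $\Lambda := \{j : y_j = 0\}$, the constraint forces $f_j(x) = 0$ for $j \in \Lambda$, so $x \in \bigcap_{j \in \Lambda} S_j$. By item~(\ref{def:1.5}) of Definition~\ref{def:1}, this intersection is transverse of codimension $|\Lambda|$, and hence the gradients $\{\nabla f_j(x)\}_{j \in \Lambda}$ are linearly independent. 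The full Jacobian therefore has rank $l$ at every point of $M$, so $M$ is a non-singular real algebraic set of dimension $n + \sum_j k_j - l = m$.

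The stated properties of $f_D$ then follow directly from the shape of the $F_j$. Since $(x, y) \in M$ forces $f_j(x) = \|y_j\|^2 \geq 0$ for all $j$, the image lies in $\overline{D}$; conversely every $x \in \overline{D}$ lifts by choosing each $y_j$ on the sphere of radius $\sqrt{f_j(x)}$, yielding $f_D(M) = \overline{D}$. The fiber over $x \in D$ is exactly $\prod_{j=1}^l S^{k_j - 1}(\sqrt{f_j(x)})$, a product of unit spheres up to diffeomorphism, of dimension $\sum(k_j - 1) = m - n$. Over such an $x$ every $y_j$ is nonzero, so the $y$-entries of the rows of the Jacobian show that $f_D$ is a submersion there; over $x \in \overline{D} - D$ at least one $y_j$ vanishes and the differential drops rank, giving $f_D(S(f_D)) = \overline{D} - D$.

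Connectedness follows from properness: fibers of $f_D$ are bounded because $\|y_j\| = \sqrt{f_j(x)}$, so $f_D$ is closed and hence a quotient map onto the connected set $\overline{D}$, and each fiber is a product of spheres (some factors possibly collapsed to points) which is connected since $k_j \geq 2$, so $M$ is connected by the standard argument for quotient maps with connected fibers over a connected base. The expected obstacle is the non-singularity analysis at points where several $y_j$ vanish simultaneously; this is exactly the configuration the NC transversality condition in Definition~\ref{def:1} is designed to control, so once that definition is in place the remaining verifications are immediate readings of the algebraic form of the system $F_j$.
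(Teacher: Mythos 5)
Your construction is essentially identical to the paper's: the paper also takes the zero set of the polynomials $f_j(x)-\|y_j\|^2$ with auxiliary variables $y_j\in\mathbb{R}^{d_j}$, verifies non-singularity by the same case analysis (rows with $y_j\neq 0$ versus the index set where $y_j=0$, using the transversality condition of Definition \ref{def:1}), and defines $f_D$ as the projection to $\mathbb{R}^n$. Your write-up is correct, and if anything more explicit than the paper on the dimension bookkeeping and on connectedness (via properness and connected fibers), so no substantive difference in approach.
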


\begin{proof}

This reviews a main result or Main Theorem 1 of \cite{kitazawa5} explicitly. This is also similar to original proofs \cite{kitazawa6, kitazawa7}. For example, we abuse the notation from Definition \ref{def:1}.

We use the notation like $x:=(x_1,\cdots,x_k)$ for an arbitrary integer $k>0$ in considering local coordinates and explicit points for example.

We define a subset $S \subset {\mathbb{R}}^{m+1}$ by
$S:= \{(x,y) \in \overline{D} \times {\mathbb{R}}^{m-n+1} \subset {\mathbb{R}}^n \times {\mathbb{R}}^{m-n+1}={\mathbb{R}}^{m+1} \mid f_j(x_1,\cdots,x_n)-||y_j||^2=0, j \in {\mathbb{N}_l}\}$ and investigate this. 
First, this is a semi-algebraic set in ${\mathbb{R}}^{m+1}$.
Here $y_j:=(y_{j,1},\cdots,y_{j,d_j}) \in {\mathbb{R}}^{d_j}$ and $y:=(y_1,\cdots,y_l)$ where $d_j>0$ is a suitably chosen integer.


 
We investigate some partial derivatives of the function defined canonically from the real polynomial $f_j(x_1,\cdots,x_n)-{\Sigma}_{j^{\prime}=1}^{d_j} {y_{j,j^{\prime}}}^2$. For these derivatives, variants are $x_j$ and $y_{j,j^{\prime}}$. 

We first choose a point $(x_0,y_0) \in S$ such that $y_0$ is not the origin.
By the assumption $f_{j}(x_0)>0$ for each $j$. 
Here we consider the partial derivative of the function for each variant $y_{j,j^{\prime}}$.
We introduce our notation on $y_0$ by $y_0:=(y_{0,1},\cdots y_{0,l})$ and $y_{0,j}:=(y_{0,j,1},\cdots,y_{0,j,d_j}) \in {\mathbb{R}}^{d_j}$. 
As a result, we have $2{y_{j,j_0}}=2y_{0,j,j_0} \neq 0$ for some variant $y_{j,j_0}$ where $j^{\prime}=j_0$ in $y_{j,j^{\prime}}$. 

The differential of the restriction of the function defined canonically from the real polynomial $f_j(x)-{||y_j||}^2$ at $(x_0,y_0)$ is not of rank $0$. This is not a singular point of this function defined from $f_j(x_1,\cdots,x_n)-{\Sigma}_{j^{\prime}=1}^{d_j} {y_{j,j^{\prime}}}^2$. We also have the fact that the map into ${\mathbb{R}}^l$ obtained canonically from the $l$ functions is of rank $l$.

Next, we choose a point $(x_{\rm O},y_{\rm O}) \in S$ such that $y_{\rm O}$ is the origin. We can also see that $x_{\rm O} \in S_{j}$ for $j \in J \subset {\mathbb{N}}_l$ and $x_{\rm O} \notin S_{j}$ for $j \notin J$ for a suitably chosen set $J$.

By the assumption, for each real polynomial $f_{j_1}(x)$ with $j_1 \notin J$, $f_{j_1}(x_{\rm O})> 0$. The function defined canonically from the real polynomial $f_{j_0}$ with $j_0 \in J$ has no singular points on the hypersurfaces $S_{j_0}$ by the assumption that $S_{j}$ is non-singular for each general $j$. 
Remember also that the notion of a non-singular real algebraic manifold is via these real polynomials and the naturally defined map with the rank of its differential. Consider the partial derivative of the function defined from the real polynomial $f_{j_0}(x)-{||y_{j_0}||}^2$ with $j_0 \in J$ for each general variant $x_{j}$. For the map into ${\mathbb{R}}^{|J|}$ obtained canonically from the $|J|$ real polynomials, the rank of the differential at the point is $|J|$. This comes from the assumption on the "transversality" for the hypersurfaces $S_j$ in Definition \ref{def:1}. 

By respecting the real polynomials, we consider the canonically obtained map into ${\mathbb{R}}^{l-|J|}$ similarly and we have that the rank of the differential at the point is $l-|J|$. We can see this by considering the partial derivative by some variant $y_{j_1,j^{\prime}}$ with $j_1 \notin J$. This comes from an argument of the case of $(x_0,y_0)$. We have a result as in the case $(x_0,y_0)$.

By the assumption and the definition, for each point ${x_0}^{\prime}$ in ${\mathbb{R}}^n-\overline{D}$ sufficiently close to $D$, the set $S_{{x_0}^{\prime}}:= \{({x_0}^{\prime},y) \in {\mathbb{R}}^n \times {\mathbb{R}}^{m-n+1} \subset {\mathbb{R}}^n \times {\mathbb{R}}^{m-n+1}={\mathbb{R}}^{m+1} \mid f_j(x_1,\cdots,x_n)-||y_j||^2=0, j \in {\mathbb{N}_l}\}$ is empty. From this with the implicit function theorem for example, the original set $S$ is a smooth closed and connected manifold. We can easily see that it is also a connected component of the real algebraic set defined by the $l$ real polynomials. 

By the structures of our sets, we can put $M:=S$ and define $f_D:M \rightarrow {\mathbb{R}}^n$ as the canonical projection to ${\mathbb{R}}^n$. This completes the proof.
	
\end{proof}

 \subsection{A proof of Main Theorems.}

\begin{MainThm}
\label{mthm:2}	
In Main Theorem \ref{mthm:1}, we can take $D$ as in Definition \ref{def:1} and a map $f_D$ as in Proposition \ref{prop:1}. We abuse the notation. We can have the set $D$ and the map $f_D:M \rightarrow {\mathbb{R}}^n$ enjoying the following properties. 
\begin{enumerate}
	\item $\overline{D}-D$ is a union of finitely many $2$-dimensional smooth connected submanifolds of connected components of the real algebraic sets defined by single real polynomials of degrees at most $2$. Let $\{S_j\}_{j=1}^l$ denote the family of these connected components of hypersurfaces as in Definition \ref{def:1}.
\item Each connected component $S_j$ of these hypersurfaces is the product of a connected component of a hyperbola and a copy of the {\rm (}$n-1${\rm )}-dimensional affine space ${\mathbb{R}}^{n-1}$.
\end{enumerate}
\end{MainThm}

\begin{proof}[A proof of Main Theorems]
We investigate several cases. In each case, we choose an NC semi-algebraic set $D$ in a suitable way. 
Of course we respect the four conditions (\ref{def:1.1}), (\ref{def:1.2}), (\ref{def:1.3}) and (\ref{def:1.4}) in these cases.
We explain about the condition (\ref{def:1.5}) after these arguments as a short
remark.

\ \\  
Case 1. The case $l=2$ with $l_{{\mathbb{N}}_{l-1}}({\mathbb{N}}_{l-1})=\{0\}$. \\
	\indent We can take $D_0$ as the interior of the bounded connected closed set surrounded by two connected subsets $C_{{\rm H}_{+},a_{1,1},b_{1,1},c_{1,1}}$ and $C_{{\rm H}_{-},a_{1,2},b_{1,2},c_{1,2}}$ enjoying the following properties.
\begin{itemize}
\item $c_{1,1}, c_{1,2}>0$.
\item The set $C_{{\rm H}_{+},a_{1,1},b_{1,1},c_{1,1}} \bigcap C_{{\rm H}_{-},a_{1,2},b_{1,2},c_{1,2}}$ is represented by $\{(t_{1},s_{1,1}),(t_{2},s_{1,2})\}$ with $s_{1,1}>s_{1,2}$.
\item We can take $S_1:=C_{{\rm H}_{+},a_{1,1},b_{1,1},c_{1,1}}$ and $S_2:=C_{{\rm H}_{-},a_{1,2},b_{1,2},c_{1,2}}$ to apply Proposition \ref{prop:1} by putting $D:=D_0$ to complete the proof in this case.
\end{itemize}
Here $D$ is a so-called convex set. \\
\ \\
	\noindent Case 2. The case $l \geq 3$ with $l_{{\mathbb{N}}_{l-1}}({\mathbb{N}}_{l-1})=\{0\}$. \\
\indent 
First we introduce a subset in ${\mathbb{R}}^2$ important in our construction.

For each $t_j$ satisfying $2 \leq j \leq l-1$, we can suitably choose two connected subsets $C_{{\rm H}_{+},a_{2,j,1},b_{2,j,1},c_{2,j,1}}$ and $C_{{\rm H}_{+},a_{2,j,2},b_{2,j,2},c_{2,j,2}}$ and another two connected subsets $C_{{\rm H}_{-},a_{2,j,3},b_{2,j,3},c_{2,j,3}}$ and $C_{{\rm H}_{-},a_{2,j,4},b_{2,j,4},c_{2,j,4}}$ enjoying the following properties.
\begin{itemize}
\item $a_{2,j,1}<t_j<a_{2,j,2}$. $a_{2,j,3}>t_j>a_{2,j,4}$.
\item $b_{2,j,1}=b_{2,j,2}=b_{2,j,3}=b_{2,j,4}=0$.
\item $c_{2,j,1}, c_{2,j,3}>0$ and $c_{2,j,2}, c_{2,j,4}<0$.
\item The set $C_{{\rm H}_{+},a_{2,j,1},b_{2,j,1},c_{2,j,1}} \bigcap C_{{\rm H}_{+},a_{2,j,2},b_{2,j,2},c_{2,j,2}}$ is represented by $\{(t_j,s_{2,j,1})\}$ for a suitably chosen real number $s_{2,j,1}>0$.
\item The set $C_{{\rm H}_{-},a_{2,j,3},b_{2,j,3},c_{2,j,3}} \bigcap C_{{\rm H}_{+},a_{2,j,4},b_{2,j,4},c_{2,j,4}}$ is represented by $\{(t_j,s_{2,j,2})\}$ for a suitably chosen real number $s_{2,j,2}<0$.
\item The set $C_{{\rm H}_{+},a_{2,j,1},b_{2,j,1},c_{2,j,1}} \bigcup C_{{\rm H}_{+},a_{2,j,2},b_{2,j,2},c_{2,j,2}}$ is below the set \\ $C_{{\rm H}_{+},a_{2,j,3},b_{2,j,3},c_{2,j,3}} \bigcup C_{{\rm H}_{+},a_{2,j,4},b_{2,j,4},c_{2,j,4}}$.
\item The set $C_{{\rm H}_{-},a_{2,j,1},b_{2,j,1},c_{2,j,1}} \bigcup C_{{\rm H}_{-},a_{2,j,2},b_{2,j,2},c_{2,j,2}}$ is below the set \\ $C_{{\rm H}_{-},a_{2,j,3},b_{2,j,3},c_{2,j,3}} \bigcup C_{{\rm H}_{-},a_{2,j,4},b_{2,j,4},c_{2,j,4}}$.
\item We can choose $\{S_{j^{\prime}}\}_{j^{\prime}=1}^4$ as the family of these four connected curves in Definition \ref{def:1}. In other words, we put $S_1:=C_{{\rm H}_{+},a_{2,j,1},b_{2,j,1},c_{2,j,1}}$ and $S_2:=C_{{\rm H}_{+},a_{2,j,2},b_{2,j,2},c_{2,j,2}}$ for the former two and $S_3:=C_{{\rm H}_{-},a_{2,j,3},b_{2,j,3},c_{2,j,3}}$ and $S_4:=C_{{\rm H}_{-},a_{2,j,4},b_{2,j,4},c_{2,j,4}}$ for the latter two. After that we define $D_{t_1,t_j,t_l} \subset {\mathbb{R}}^2$ as the connected open set surrounded by these four curves.
\end{itemize}


\indent We define a desired set $D$. 
We define $D$ as a subset of ${\mathbb{R}}^2 \times {\mathbb{R}}^{l-2}={\mathbb{R}}^{l}$ and see that this is our desired set. 
A point $(x_1,x_2,\cdots,x_{l}) \in {\mathbb{R}}^{l}$ is defined to be a point in $D$ if and only if the following two hold.
\begin{itemize}
\item $(x_1,x_2) \in D_0$ where $D_0$ is a desired set in Case 1 where the pair "$(t_1,t_2)$" there is replaced by $(t_1,t_l)$.
\item For each integer $2 \leq j \leq l-1$, $(x_1,x_{j+1}) \in D_{t_1,t_j,t_l}$.
\end{itemize}
By this rule, we can define a desired set $D$. We can complete our proof in this case similarly. \\


\ \\
\noindent Case 3. A general case for $l \geq 3$. \\
\indent We define another important subset in ${\mathbb{R}}^2$. 

For each $t_j$ satisfying $1 \leq j \leq l-1$, we can suitably choose two connected subsets $C_{{\rm H}_{+},a_{3,j,1},b_{3,j,1},c_{3,j,1}}$ and $C_{{\rm H}_{+},a_{3,j,2},b_{3,j,2},c_{3,j,2}}$ and another two connected subsets $C_{{\rm H}_{-},a_{3,j,3},b_{3,j,3},c_{3,j,3}}$ and $C_{{\rm H}_{-},a_{3,j,4},b_{3,j,4},c_{3,j,4}}$ enjoying the following properties.
\begin{itemize}
\item $a_{3,j,1}=t_{j+1}>a_{3,j,2}=t_{j}$. $a_{3,j,3}>t_j>a_{3,j,4}$.
\item $b_{3,j,1}=b_{3,j,2}=b_{3,j,3}=b_{3,j,4}=0$.
\item $c_{3,j,1}, c_{3,j,3}>0$ and $c_{3,j,2}, c_{3,j,4}<0$.
\item The set $C_{{\rm H}_{+},a_{3,j,1},b_{3,j,1},c_{3,j,1}} \bigcap C_{{\rm H}_{+},a_{3,j,2},b_{3,j,2},c_{3,j,2}}$ is empty.
\item The set $C_{{\rm H}_{-},a_{3,j,3},b_{3,j,3},c_{3,j,3}} \bigcap C_{{\rm H}_{-},a_{3,j,4},b_{3,j,4},c_{3,j,4}}$ is represented by $\{(t_j,s_{3,j})\}$ for a suitably chosen real number $s_{3,j}<0$.
\item The set $C_{{\rm H}_{+},a_{3,j,1},b_{3,j,1},c_{3,j,1}} \bigcup C_{{\rm H}_{+},a_{3,j,2},b_{3,j,2},c_{3,j,2}}$ is below the set \\ $C_{{\rm H}_{+},a_{3,j,3},b_{3,j,3},c_{3,j,3}} \bigcup C_{{\rm H}_{+},a_{3,j,4},b_{3,j,4},c_{3,j,4}}$.
\item The set $C_{{\rm H}_{-},a_{3,j,1},b_{3,j,1},c_{3,j,1}} \bigcup C_{{\rm H}_{-},a_{3,j,2},b_{3,j,2},c_{3,j,2}}$ is below the set \\ $C_{{\rm H}_{-},a_{3,j,3},b_{3,j,3},c_{3,j,3}} \bigcup C_{{\rm H}_{-},a_{3,j,4},b_{3,j,4},c_{3,j,4}}$.
\item We can choose $\{S_{j^{\prime}}\}_{j^{\prime}=1}^4$ as the family of these four connected curves in Definition \ref{def:1}. In other words, we put $S_1:=C_{{\rm H}_{+},a_{3,j,1},b_{3,j,1},c_{3,j,1}}$ and $S_2:=C_{{\rm H}_{+},a_{3,j,2},b_{3,j,2},c_{3,j,2}}$ for the former two and $S_3:=C_{{\rm H}_{-},a_{3,j,3},b_{3,j,3},c_{3,j,3}}$ and $S_4:=C_{{\rm H}_{-},a_{3,j,4},b_{3,j,4},c_{3,j,4}}$ for the latter two. After that we define $D_{t_1,t_j,t_{j+1},t_l} \subset {\mathbb{R}}^2$ as the connected open set surrounded by these four curves.
\end{itemize}
\indent We define a desired set $D$. 
We define $D$ as a subset of ${\mathbb{R}}^2 \times {\mathbb{R}}^{l-1}={\mathbb{R}}^{l+1}$ and see that this is our desired set. 
A point $(x_1,x_2,\cdots,x_{l+1}) \in {\mathbb{R}}^{l+1}$ is defined to be a point in $D$ if and only if the following two hold.
\begin{itemize}
\item $(x_1,x_2) \in D_0$ where $D_0$ is a desired set in Case 1 where the pair "$(t_1,t_2)$" there is replaced by the pair $(t_1,t_l)$.
\item $(x_1,x_3) \in D_{t_1,t_2,t_l}$ if $l_{{\mathbb{N}}_{l-1}}(1)=0$ and $(x_1,x_3) \in D_{t_1,t_1,t_2,t_l}$ if $l_{{\mathbb{N}}_{l-1}}(1)=1$.
\item For each integer $2 \leq j \leq l-1$, $(x_1,x_{j+2}) \in D_{t_1,t_j,t_l}$ if $l_{{\mathbb{N}}_{l-1}}(j)=0$ and $(x_1,x_{j+2}) \in D_{t_1,t_j,t_{j+1},t_l}$ if $l_{{\mathbb{N}}_{l-1}}(j)=1$.
\end{itemize}
By this rule, we can define a desired set $D$. We can complete our proof in this case similarly. \\

\ \\
\indent We give some comments on the proof.

By considering suitable mutually independent tangent vectors to the intersections of distinct hypersurfaces here or mutually independent normal vectors to the mutually distinct hypersurfaces, we can check the condition on the transversality or the condition (\ref{def:1.5}) of Definition \ref{def:1}.

Similar exposition via normal vectors is also presented shortly in Remark 2 of \cite{kitazawa7} and this is
a fundamental argument.

More precisely, the following shows a key ingredient in knowing this, for example. Every hypersurface $S_j$ in Case 2 or Case 3, where we abuse the notation of Definition \ref{def:1} and Proposition \ref{prop:1},
is represented as the product of a connected component of a hyperbola in some copy of the real affine space ${\mathbb{R}}^2$ embedded in ${\mathbb{R}}^l$ or ${\mathbb{R}}^{l+1}$
straightly and a copy of the real affine space ${\mathbb{R}}^{l-2}$ or ${\mathbb{R}}^{l-1}$ embedded vertically. 
We can define the copy of the affine space ${\mathbb{R}}^2$ uniquely in a suitable way and we do.
For our distinct hypersurfaces, the number of the canonically corresponding 
connected components of hyperbolas in some copy of the real affine space ${\mathbb{R}}^2$ embedded in ${\mathbb{R}}^l$ or ${\mathbb{R}}^{l+1}$
straightly before is always at most $2$. Any two distinct connected curves here intersect satisfying the transversality of course and the number of the intersection is at most $1$.

We also give a short remark on singular points of the resulting function $f:={\pi}_{n,1} \circ f_D$ where we abuse the notation. In short, at least one singular point of $f$ appears in the preimage ${f_D}^{-1}(p)$ of a point $p \in \overline{D}$ if and only if $p$ is contained in at least two distinct hypersurfaces in the family $\{S_j\}$. 

This completes the proof.
	\end{proof}
\subsection{Future problems.}
As our future problems, we propose new and natural problems explicitly.

\begin{Prob}
\label{prob:2}
For given data as in Problem \ref{prob:1} (with Main Theorems \ref{mthm:1} and \ref{mthm:2}), can you formulate a simplest or most natural smooth real algebraic function $f:M \rightarrow \mathbb{R}$ and the manifold $M$? It is also important to respect $f_D:M \rightarrow {\mathbb{R}^n}$ enjoying the relation $f:={\pi}_{n,1} \circ f_D$.
\end{Prob}
Our previous example in \cite{kitazawa7} seems to be most natural. 
Our new examples may be simpler in choosing each hypersurface as the product of a connected component of a hyperbola and a copy of the $1$-dimensional real affine space $\mathbb{R}$.

We consider more explicit cases and cases seeming to be simplest and most fundamental. For example, in the case $l=2$, it seems to be most natural to obtain the function $f:={\pi}_{n,1} \circ f_D$ enjoying the following properties.
\begin{itemize}
\item There exists some positive integer $m>0$.
\item For $m$ before and any integer $n$ satisfying $2 \leq n \leq m$, we can consider a canonical projection of a unit sphere or the product map of a canonical projection of a unit sphere into the real affine space $\mathbb{R}$ and the identity map on the real affine space $\mathbb{R}$, denoted by $f_D:M_0 \rightarrow {\mathbb{R}}^n$, and we have some diffeomorphism $\Phi:M \rightarrow M_0$ enjoying the relation $f_0 \circ \Phi=f_D$. 
Here, a diffeomorphism means a smooth map with no singular points which is also a homeomorphism. As $\Phi:M \rightarrow M_0$, it may be also natural to choose $\Phi$ as a real algebraic diffeomorphism.
\end{itemize}

As another example, we explain about the case $l=4$ and $l_{{\mathbb{N}}_3}(\{1,2,3\})=\{0\}$. In this case, it seems to be most natural to obtain a map $f_D:M \rightarrow {\mathbb{R}}^n$ with the function $f:={\pi}_{n,1} \circ f_D$ for some integer $n \geq 3$ enjoying the following two properties.
\begin{itemize}
\item There exists some positive integer $k>0$. We define $m:=k+n-1$ in our Main Theorems.
\item For $n$, $k$ and $m$ before, we consider the product map of the following two natural maps.
\begin{itemize}
\item A canonical projection of a unit sphere $S^{k}$ into the real affine space $\mathbb{R}$.
\item The identity map on a copy of the unit sphere $S^{n-1}$.
\end{itemize}

We also consider the composition with a natural embedding into ${\mathbb{R}}^n$. We have a map represented in this way and denoted by the form $f_0:M_0:=S^{k} \times S^{n-1} \rightarrow \mathbb{R} \times S^{n-1} \rightarrow {\mathbb{R}}^n$. For some map obtained in this way, we have some diffeomorphism $\Phi:M \rightarrow M_0$ enjoying the relation $f_0 \circ \Phi=f_D$. 
Here, a diffeomorphism means a smooth map with no singular points which is also a homeomorphism as before. It may be also natural to choose $\Phi$ as a real algebraic diffeomorphism as before.
\end{itemize}
According to our construction, types of singular points of $f_D$ seem to be more complicated than general cases. This exposition comes from the viewpoint of singularity theory. Here we do not explain about singularity theory of differentiable maps precisely. For related systematic theory, see \cite{golubitskyguillemin} for example.
\begin{Prob}
\label{prob:3}
Can we formulate an explicit algorithm or an explicit rule to obtain the maps and the manifolds as just before?
\end{Prob}

This is more explicit problem than Problem \ref{prob:2}.

These problems also seem to be natural problems in smooth cases. We may see that some explicit answers are already given in \cite{masumotosaeki, saeki2} with papers \cite{kitazawa1, kitazawa2} by the author for example in the smooth cases. It is also important that \cite{saeki2} constructs smooth maps which are not real analytic. \cite{kitazawa2} also constructs such maps. They also imply that constructing smooth real algebraic functions whose Reeb graphs are given graphs and whose preimages are of prescribed types is difficult in general situations.

As a bit different viewpoint on this, \cite{kitazawa4, kitazawa8} construct such maps for smoothly immersed $n$-dimensional manifolds whose dimensions are same as the dimensions $n$ of the manifolds of the targets. They are so-called {\it special generic} maps. Special generic maps are, in short, naturally generalized versions of Morse functions with exactly two singular points on spheres in Reeb's sphere theorem and canonical projections of unit spheres. Maps constructed in Proposition \ref{prop:1} are topologically regarded as special generic maps like ones obtained in this way where we consider cases the family of hypersurfaces consisting of exactly one hypersurface. Here "topologically" is for the existence of homeomorphisms $\Phi:M \rightarrow M_0$ enjoying the properties discussed just before including the property that the two manifolds $M$ and $M_0$ are diffeomorphic. Of course two manifolds are defined to be diffeomorphic if and only if there exists a diffeomorphism between these two manifolds. 

For special generic maps, see also \cite{saeki1} for example. This is a pioneering systematic study on algebraic topological or differential topological properties of special generic maps and the manifolds.
The class of special generic maps is
important in
algebraic topology and differential topology of manifolds. For example, such maps restrict the topologies of the manifolds and the differentiable structures of the spheres and general manifolds
 of the domains strongly.

Our problems may be variants of Problem 2.3 of \cite{saeki3}. The problem is a natural problem in singularity theory and applications to differential topology of manifolds. This asks us to give most standard or simplest maps (whose codimensions are not positive) on given smooth (closed) manifolds.

\end{document}